\theoremstyle{plain}
\newtheorem{thm}{Theorem}
\newtheorem{lemma}{Lemma}
\newtheorem{prop}{Proposition}
\theoremstyle{definition}
\newtheorem{rem}{Remark}
\numberwithin{equation}{section}
\numberwithin{thm}{section}
\numberwithin{prop}{section}
\numberwithin{lemma}{section}
\numberwithin{cor}{section}
\numberwithin{defn}{section}
\numberwithin{rem}{section}
\numberwithin{ex}{section}
\newcommand{\pd}{\partial}
\newcommand{\mbb}{\mathbb}
\newcommand{\ep}{\varepsilon}
\newcommand{\re}{\mbb R}
\newcommand{\eqal}[1]{\begin{equation}\begin{aligned}#1\end{aligned}\end{equation}}
\begin{document}

\title{Regularity for the Monge-Amp\`ere equation by doubling}

\author{Ravi Shankar and Yu Yuan}

\date{}

\maketitle

\begin{abstract}
    We give a new proof for the interior regularity of strictly convex solutions of the Monge-Amp\`ere equation.  Our approach uses a doubling inequality for the Hessian in terms of the extrinsic distance function on the maximal Lagrangian submanifold determined by the potential equation.
\end{abstract}

\section{Introduction}

\footnotetext[1]{\today}

In this note, we present a new proof of interior regularity for strictly convex viscosity solutions of the Monge-Amp\`ere equation in general dimension:
\eqal{
\label{ma}
F(D^2u)=\det D^2u=1.
}

\begin{thm}
\label{thm:MA}
Let $u$ be a strictly convex viscosity solution of \eqref{ma} on a domain $\Omega\subset\re^n$ for $n\ge 2$.  Then $u$ is smooth inside $\Omega$.
\end{thm}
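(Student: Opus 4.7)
The plan is to reduce Theorem \ref{thm:MA} to an interior pointwise bound on $D^2 u$; once $|D^2 u|$ is controlled, the equation becomes uniformly elliptic, whereupon Evans--Krylov and standard bootstrapping deliver $C^\infty$ regularity. Since strict convexity of $u$ already gives $u \in C^{1,\alpha}$ via Caffarelli's theory for sections, the remaining task is to bound $|D^2 u|$ on compact subsets of $\Om$.

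For the geometric setup, I would pass from the potential $u$ to its gradient graph $M = \{(x, Du(x)) : x \in \Om\} \subset \re^n \times \re^n$. Endowing $\re^{2n}$ with the pseudo-Euclidean inner product $\langle (x,y), (x',y')\rangle = x \cdot y' + x' \cdot y$ (or, equivalently, applying a rotation to diagonalize it), the graph $M$ is Lagrangian with respect to the compatible symplectic form, and the equation $\det D^2 u = 1$ is precisely the vanishing of the mean curvature of $M$, so that $M$ is \emph{maximal} Lagrangian. Tangent vectors $(e_i, D^2u\cdot e_i)$ pull back the ambient pairing to $2 D^2 u$, so the Hessian of $u$ is encoded in the induced metric and second fundamental form of $M$.

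The heart of the argument is a doubling inequality for the Hessian in terms of the \emph{extrinsic} distance from a reference point $X_0 \in M$. I would search for an auxiliary function built from the largest eigenvalue, for instance $\lambda_{\max}(D^2 u)$ or $\log \lambda_{\max}(D^2 u)$, and show via a Jacobi-type calculation that it is subharmonic with respect to the induced Laplace--Beltrami operator on $M$, with gradient/zero-order terms controllable by $|D^2 u|$ itself. Combined with a barrier built from the restriction of the squared extrinsic norm $|X - X_0|^2$ to $M$ (whose intrinsic Laplacian is explicit because $M$ is maximal), a maximum principle argument would yield an estimate of the form
$$\sup_{M \cap B^{\text{ext}}_{r/2}(X_0)} |D^2 u| \;\le\; C \sup_{M \cap B^{\text{ext}}_{r}(X_0)} |D^2 u|,$$
with $C$ dimensional. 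Iterating and translating extrinsic balls on $M$ into Euclidean balls in $\Om$ using strict convexity and the graphical structure would then yield the desired interior Hessian bound.

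The main obstacle I anticipate is twofold. First, one must identify an auxiliary function with the correct sign in its subharmonic inequality despite the indefinite signature of the ambient geometry, and whose coupling to $|X - X_0|^2$ is strong enough to force a quantitative doubling rather than a merely qualitative one. Second, one has to ensure that the extrinsic ball $B^{\text{ext}}_{r}(X_0) \cap M$ remains graphical over a relatively large Euclidean region, so that the doubling on $M$ converts into a pointwise bound on $D^2 u$; this is where Caffarelli's strict-convexity geometry of sections must be combined with the extrinsic-versus-intrinsic comparison on $M$.
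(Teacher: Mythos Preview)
Your doubling inequality is stated in the trivial direction: bounding $\sup$ on $B^{\text{ext}}_{r/2}$ by $\sup$ on the larger $B^{\text{ext}}_{r}$ holds with $C=1$ by containment. The useful inequality, and the one the paper proves (Proposition~\ref{prop:doub}), goes the other way---it bounds the Hessian on an \emph{outer} extrinsic ball $D^u_{r_2}(p)$ by its values on a smaller \emph{inner} ball $D^u_{r_1}(p)$. The auxiliary function is $a=\det(I+D^2u)^{1/2n}$, for which the Jacobi inequality $\Delta_g a\ge 2|\nabla_g a|^2/a$ (Proposition~\ref{prop:Jac}) is exactly strong enough to couple with a Korevaar-type cutoff in the extrinsic distance $z=(x-p)\cdot(Du(x)-Du(p))$ and force the maximum of $\eta a$ into the inner ball. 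Your proposed $\lambda_{\max}$ may or may not satisfy an inequality of this strength; that would have to be checked, and the paper does not go that route.

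More fundamentally, even with the correct doubling inequality in hand, your plan to ``iterate and translate extrinsic balls'' cannot by itself produce an interior Hessian bound: doubling only relates the Hessian on one ball to another and gives nothing without a region where $|D^2u|$ is already controlled. This is the genuine gap. The paper closes it by invoking Alexandrov--Savin partial regularity: the singular set of a viscosity solution is closed with measure zero, so near any target point one can find a center $p$ in the smooth set and choose $r_1$ so small that $D^{u_k}_{r_1}(p)$ (for a smooth approximating sequence $u_k$) lies compactly inside the smooth set, where $|D^2u_k|$ is uniformly bounded by $C^{2,\alpha}_{\text{loc}}$ convergence. The doubling inequality then propagates this bound outward to an extrinsic ball $D^{u_k}_{r_2}(p)$ containing the target point, and Evans--Krylov finishes. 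Strict convexity enters only to guarantee that the extrinsic balls (controlled above by sections via \eqref{ext} and Lemmas~\ref{lem:diam}--\ref{lem:approx}) stay compactly inside the domain; Caffarelli's $C^{1,\alpha}$ theory is not used.
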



In \cite{P64} and \cite{P78}, Pogorelov showed the Hessian estimate using the strictly convex solution as a cut-off function in a Bernstein-Pogorelov maximum principle argument; consequently interior regularity for strictly convex solutions of \eqref{ma} was derived. Recall that the generalized solution in the integral sense there is equivalent to the generalized solution in the viscosity sense for the Monge-Amp\`ere equation.  The singular solutions of Pogorelov illustrate that a condition, such as strict convexity, is necessary for regularity in three and higher dimensions.  Earlier on, Alexandrov obtained strict convexity for the two dimensional Monge-Amp\`ere equation in \cite{A}. 

\smallskip
Without the strict convexity condition in two dimensions, the interior Hessian estimate was achieved by Heinz \cite{H} using isothermal coordinates. More recently, new pointwise proofs of the two dimensional estimate were found by Chen-Han-Ou \cite{CHO} and Guan-Qiu \cite{GQ} using different test functions in the maximum principle argument.  Other two dimensional proofs follow from various works on the sigma-2 equation in higher dimensions and special Lagrangian equation in the Euclidean setting over the past 16 years. Note that Liu's partial Legendre proof \cite{L} uses the strict convexity in \cite{A}.

\smallskip
An integral proof of Hessian estimates for strictly convex solutions to \eqref{ma} was recently found in \cite{Y} using the maximal surface interpretation of the equation for potential $u$.  The Lagrangian or ``gradient'' graph in pseudo-Euclidean space $(x,Du(x))\subset (\re^n\times\re^n,2dxdy)$ is volume maximizing. By establishing a monotonicity formula in terms of the extrinsic distance $x\cdot Du(x)$ to the origin $(0,0)$, integral arguments along the lines of Trudinger's proof of the gradient estimate for the minimal surface equation yield the Hessian estimate.

\smallskip
Our argument uses the extrinsic distance in a ``doubling'' way.  It is natural to attempt Pogorelov's proof using the extrinsic distance $x\cdot Du$ as a cut-off function, instead of the strictly convex potential $u$.  
That argument degenerates near $x=0$, but we can establish an a priori doubling inequality, Proposition \ref{prop:doub}, which controls the Hessian on outer balls by its values on small inner balls, measured in the extrinsic distance.  Alexandrov-Savin partial regularity is stable under smooth approximation, so by placing the inner ball inside the smooth set, the doubling inequality propagates the regularity to the outer ball.  This argument is in a similar spirit to the recent work on the sigma-2 equation in four dimensions \cite{SY}, but requires the extrinsic distance rather than the Euclidean one, since otherwise Pogorelov's singular solutions would satisfy the doubling property.

\smallskip
Now with two applications of the extrinsic distance done to the Monge-Amp\`ere equation, it is natural to ask if the strict convexity condition can be replaced by an alternative involving only suitably defined extrinsic quantities.

\section{Extrinsic properties}

\subsection{Doubling inequality under extrinsic distance}
Taking the gradient of the both sides of the Monge-Amp\`{e}re equation%
\begin{equation}
\ln\det D^{2}u=0, \label{lnMA}%
\end{equation}
we have%
\begin{equation}%
{\textstyle\sum\limits_{i,j=1}^{n}}
g^{ij}\partial_{ij}\left(  x,Du\left(  x\right)  \right)  =0, \label{DlnMA}%
\end{equation}
where $\left(  g^{ij}\right)  $ is the inverse of the induced metric
$g=\left(  g_{ij}\right)  =D^{2}u$ on the graph $\left(  x,Du\left(  x\right)
\right)  \subset\left(  \mathbb{R}^{n}\times\mathbb{R}^{n},2dxdy\right)  $
(for simplicity of notation, we drop the $2$ in $g=2D^{2}u$). Because of
\eqref{lnMA} and \eqref{DlnMA}, the Laplace-Beltrami operator of the metric
$g$ also takes the non-divergence form $\bigtriangleup_{g}=%
{\textstyle\sum\limits_{i,j=1}^{n}}
g^{ij}\partial_{ij}.$ 

The following strong subharmonicity of Hessian $D^2u$ was found in 
\cite{Y}.
\begin{prop}[Jacobi inequality]
\label{prop:Jac}
Suppose $u$ is a smooth solution to $\det D^{2}u=1.$ Then%
\eqal{
\label{Jacb}
\bigtriangleup_{g}\ln\det\left[  I+D^{2}u\left(  x\right)  \right]  \geq
\frac{1}{2n}\ \left\vert \bigtriangledown_{g}\ln\det\left[  I+D^{2}u\left(
x\right)  \right]  \right\vert ^{2}%
}
or equivalently for $a\left(  x\right)  =\left\{  \det\left[  I+D^{2}u\left(
x\right)  \right]  \right\}  ^{\frac{1}{2n}}$%
\begin{equation}
\label{Jacobi}
\bigtriangleup_{g}a\geq2\frac{\left\vert \bigtriangledown_{g}a\right\vert
^{2}}{a}.
\end{equation}
\end{prop}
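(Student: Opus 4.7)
The plan is to establish \eqref{Jacb} by a direct pointwise computation; the equivalent form \eqref{Jacobi} with $a = \det(I+D^{2}u)^{1/(2n)}$ follows from the standard identity $\bigtriangleup_{g} \ln f = \bigtriangleup_{g} f / f - |\bigtriangledown_{g} f|^{2}/f^{2}$ applied to $f = a^{2n}$. Fix $x_{0} \in \Om$ and rotate coordinates so that $D^{2}u(x_{0}) = \operatorname{diag}(\lambda_{1}, \ldots, \lambda_{n})$; set $\mu_{i} = 1 + \lambda_{i}$. Then $g^{ij}$ and $M^{ij} := (I + D^{2}u)^{ij}$ are both diagonal at $x_{0}$, with entries $1/\lambda_{i}$ and $1/\mu_{i}$, so both sides of \eqref{Jacb} become algebraic expressions in the $\lambda_{i}$ and the third derivatives $u_{ijk}(x_{0})$.

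Two identities coming from $\ln \det D^{2}u = 0$ drive the computation. First-order differentiation gives, for each $k$, the linear relation
\[
\sum_{i} \frac{u_{iik}}{\lambda_{i}} \;=\; 0,
\]
and second-order differentiation (together with $\partial_{l} g^{ij} = -g^{ir} g^{sj} u_{rsl}$) yields
\[
\sum_{i} \frac{u_{iikl}}{\lambda_{i}} \;=\; \sum_{i,j} \frac{u_{ijk} u_{ijl}}{\lambda_{i}\lambda_{j}}.
\]
With $V := \ln \det(I + D^{2}u)$, the usual $\ln\det$ formulas give $V_{k} = M^{ij} u_{ijk}$ and $V_{kl} = M^{ij} u_{ijkl} - M^{ir} M^{sj} u_{ijk} u_{rsl}$. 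Tracing $\bigtriangleup_{g} V = g^{kl} V_{kl}$ and using the symmetry $u_{iikk} = u_{kkii}$ to invoke the second-order identity makes the fourth-derivative terms cancel, reducing $\bigtriangleup_{g} V$ to an explicit quadratic form in $\{u_{ijk}\}$.

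The main obstacle is then to verify the pointwise quadratic inequality $\bigtriangleup_{g} V \geq \frac{1}{2n} |\bigtriangledown_{g} V|^{2}$: the right-hand side is a quadratic form only in the ``diagonal'' third derivatives $u_{iik}$, whereas the left also involves $u_{ijk}$ with all three indices distinct. My plan is to first exploit the linear constraint to rewrite $V_{k} = -\sum_{i} u_{iik}/(\lambda_{i} \mu_{i})$, and then apply the Cauchy--Schwarz bound $(\sum_{i} a_{i})^{2} \leq n \sum_{i} a_{i}^{2}$ to produce the factor $n$ needed to absorb $1/(2n)$. The total symmetry of $u_{ijk}$ in its three indices allows the coefficients on the left to be symmetrized, which makes the fully off-diagonal contributions ($i,j,k$ distinct) manifestly nonnegative; the delicate step is to arrange the remaining ``partially diagonal'' terms (where two of $i,j,k$ coincide) so that they dominate the Cauchy--Schwarz bound. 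This is precisely where the specific Monge--Amp\`ere structure---the shift $\mu_{i} = 1 + \lambda_{i}$ together with $\prod_{i} \lambda_{i} = 1$---is essential, and where the sharp constant $1/(2n)$ should emerge.
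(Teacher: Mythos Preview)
Your setup is correct and coincides with the paper's proof: diagonalize $D^{2}u$ at a point, differentiate $\ln\det D^{2}u=0$ once and twice to obtain the linear relation $\sum_{i}u_{iik}/\lambda_{i}=0$ and the fourth-order identity, and use these to reduce $\bigtriangleup_{g}V$ to a nonnegative quadratic form in the $u_{ijk}$'s. Your observation that the constraint rewrites $V_{k}=-\sum_{i}u_{iik}/(\lambda_{i}\mu_{i})$ is also used in the paper.

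The gap is in the ``delicate step.'' Applying the unweighted Cauchy--Schwarz bound to the single representation $V_{k}=-\sum_{i}u_{iik}/(\lambda_{i}\mu_{i})$ yields
\[
\tfrac{1}{2n}|\bigtriangledown_{g}V|^{2}\ \le\ \tfrac{1}{2}\sum_{i,k}\frac{u_{iik}^{2}}{\lambda_{k}\lambda_{i}^{2}\mu_{i}^{2}},
\]
and you then hope to dominate this termwise by $\bigtriangleup_{g}V$. But the $u_{ppp}^{2}$ comparison already fails: the coefficient in $\bigtriangleup_{g}V$ is $1/(\lambda_{p}^{2}\mu_{p}^{2})$, while the Cauchy--Schwarz bound contributes $1/(2\lambda_{p}^{3}\mu_{p}^{2})$, and the first dominates only when $\lambda_{p}\ge 1/2$. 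So the single-representation Cauchy--Schwarz does not close by itself. (Your remark that $\prod_{i}\lambda_{i}=1$ is ``essential'' at this stage is also off: that condition is already fully absorbed into the two differential identities and plays no further algebraic role.)

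What the paper actually does at this point is keep \emph{both} representations of $V_{k}$---the direct one $\sum_{i}u_{iik}/\mu_{i}$ and the constraint one $-\sum_{i}u_{iik}/(\lambda_{i}\mu_{i})$---and, after passing to the variable $(\lambda_{i}-1)/(\lambda_{i}+1)\in(-1,1)$, apply Cauchy--Schwarz to whichever representation is favorable for the fixed outer index, splitting according to whether $\lambda_{i}\ge 1$ or $\lambda_{i}<1$. This case analysis is precisely what repairs the $u_{ppp}^{2}$ term (the direct representation gives the complementary condition $\lambda_{p}\le 2$), and it is the idea your outline is missing.
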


Using the Jacobi inequality \eqref{Jacobi}, we derive an a priori doubling estimate for the Hessian in terms of the extrinsic balls of smooth strongly convex function $u$,
$$
D^u_r(p)=\{x\in\Omega:(x-p)\cdot (Du(x)-Du(p))<r^2\}.
$$
Denote the extrinsic distance of the position vector
$\left(  x,Du\right)  $ to the origin by%
\[
z=x\cdot u_{x}=\left(  x_{1},\cdots,x_{n}\right)  \cdot Du.
\]
Then%
\begin{gather}
\left\vert \nabla_{g}z\right\vert ^{2}=%
{\textstyle\sum\limits_{i,j=1}^{n}}
g^{ij}\partial_{i}z\partial_{j}z=%
{\textstyle\sum\limits_{i,j,k=1}^{n}}
g^{ij}\left(  u_{i}+x_{k}u_{ki}\right)  \left(  u_{j}+x_{k}u_{kj}\right)
\nonumber\\
\overset{p}{=}%
{\textstyle\sum\limits_{i=1}^{n}}
g^{ii}\left(  u_{i}^{2}+x_{i}^{2}u_{ii}^{2}+2x_{i}u_{i}u_{ii}\right)
\geq4x\cdot u_{x}=4z,\label{gradientZ}\\
\bigtriangleup_{g}z=x\cdot\bigtriangleup_{g}u_{x}+u_{x}\cdot\bigtriangleup
_{g}x+2\left\langle \nabla_{g}x,\nabla_{g}u_{x}\right\rangle =2%
{\textstyle\sum\limits_{i,j=1}^{n}}
g^{ij}\partial_{i}x_{k}\partial_{j}u_{k}\nonumber\\
\overset{p}{=}2%
{\textstyle\sum\limits_{i=1}^{n}}
g^{ii}u_{ii}=2n, \label{LaplaceZ}%
\end{gather}
where at any fixed point $p,$ we assume that $D^{2}u$ is diagonalized, and we
used (\ref{DlnMA}) for $\bigtriangleup_{g}z.$  It is clear that \eqref{gradientZ} and \eqref{LaplaceZ} work for other centers as well, $z=\langle x-p,Du(x)-Du(p)\rangle$.

\begin{prop}[Doubling inequality]
\label{prop:doub}
If $u$ is a smooth solution of \eqref{ma} on domain $\Omega\subset\re^n$ with $D^u_{r_4}(p)\subset\subset\Omega$ and $a=\det(I+D^2u)^{1/2n}$, then for $r_1<r_2<r_3<r_4$,
\eqal{
\label{doub}
\sup_{D^u_{r_2}(p)}a(x)\le C(n,r_1,r_2,r_3)\sup_{D^u_{r_1}(p)}a(x).
}
\end{prop}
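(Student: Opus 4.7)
The plan is to run a Pogorelov-style maximum-principle argument, with the extrinsic distance $z(x)=(x-p)\cdot(Du(x)-Du(p))$ serving as the cutoff in place of the strictly convex potential. Dividing the Jacobi inequality \eqref{Jacobi} by $a^{2}$ shows that $v:=\log a$ is a subsolution on the graph metric, namely $\Delta_{g}v\ge |\nabla_{g}v|^{2}$. Together with the two identities $\Delta_{g}z=2n$ and $|\nabla_{g}z|^{2}\ge 4z$ from \eqref{gradientZ}--\eqref{LaplaceZ}, this should allow us to build a barrier in $z$ alone that pushes the maximum of a suitable test function into the region where $z$ is small.

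Concretely, I would introduce
\[
W(x)\,=\,v(x)+\alpha\log\!\bigl(r_{3}^{2}-z(x)\bigr)
\]
on $\overline{D^{u}_{r_{3}}(p)}$, where $\alpha>1$ is a constant to be chosen in terms of $n$, $r_{1}$, $r_{3}$. Since $u$ is smooth and $D^{u}_{r_{4}}(p)\subset\subset\Omega$, the closure $\overline{D^{u}_{r_{3}}(p)}$ is a compact subset of $\Omega$, while $W\to-\infty$ on $\{z=r_{3}^{2}\}$, so the maximum of $W$ is attained at an interior point $x^{*}$ with $z(x^{*})<r_{3}^{2}$. The heart of the argument is to show $z(x^{*})\le r_{1}^{2}$ once $\alpha$ is large enough.

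At $x^{*}$ the condition $\nabla_{g}W=0$ gives $\nabla_{g}v=\tfrac{\alpha}{r_{3}^{2}-z}\nabla_{g}z$, hence $|\nabla_{g}v|^{2}=\tfrac{\alpha^{2}}{(r_{3}^{2}-z)^{2}}|\nabla_{g}z|^{2}$. Plugging this into $\Delta_{g}v\ge|\nabla_{g}v|^{2}$ and expanding $\Delta_{g}\log(r_{3}^{2}-z)$ using $\Delta_{g}z=2n$ and $|\nabla_{g}z|^{2}\ge 4z$, a short computation reduces the inequality $\Delta_{g}W(x^{*})\le 0$ to
\[
0\ \ge\ \frac{\alpha}{(r_{3}^{2}-z)^{2}}\bigl[4z\,(\alpha-1)-2n\,(r_{3}^{2}-z)\bigr].
\]
If $z(x^{*})\ge r_{1}^{2}$, the bracket is strictly positive once $\alpha>1+\tfrac{n(r_{3}^{2}-r_{1}^{2})}{2r_{1}^{2}}$, a contradiction. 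So $z(x^{*})\le r_{1}^{2}$ and $x^{*}\in\overline{D^{u}_{r_{1}}(p)}$. For any $x\in D^{u}_{r_{2}}(p)$ the inequality $W(x)\le W(x^{*})$ rearranges to
\[
v(x)\ \le\ \sup_{D^{u}_{r_{1}}(p)} v\ +\ \alpha\log\frac{r_{3}^{2}}{r_{3}^{2}-r_{2}^{2}},
\]
and exponentiating gives \eqref{doub} with $C(n,r_{1},r_{2},r_{3})=\bigl(r_{3}^{2}/(r_{3}^{2}-r_{2}^{2})\bigr)^{\alpha}$.

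The main obstacle is the degeneracy $|\nabla_{g}z|^{2}\ge 4z$ near $x=p$: the barrier only produces a contradiction in the annulus $\{r_{1}^{2}<z<r_{3}^{2}\}$, which is exactly why one must retain the inner sup on the right-hand side of \eqref{doub} and why $\alpha$ (hence $C$) blows up as $r_{1}\to 0$. A minor technicality is that if the sublevel set $\{z<r_{3}^{2}\}$ happens to be disconnected, one should run the argument on the component containing $p$; the same cutoff drives $W$ to $-\infty$ on the boundary, so the conclusion is unaffected.
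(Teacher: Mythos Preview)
Your argument is correct and follows essentially the same route as the paper: a maximum-principle argument using the Jacobi inequality together with $\Delta_g z=2n$ and $|\nabla_g z|^2\ge 4z$ to force the maximum of a $z$-cutoff test function into $\{z\le r_1^2\}$. The only difference is cosmetic: the paper uses the Korevaar-type multiplicative cutoff $w=\bigl(e^{(r_3^2-z)/h}-1\bigr)a$ with $h=2r_1^2/n$, whereas you use the Pogorelov-type power cutoff $(r_3^2-z)^\alpha a$ (written additively in logs); both collapse to the same one-line contradiction $4z\le 2n\cdot(\text{parameter})$ at the maximum, with slightly different constants.
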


\begin{proof}
We form the following Korevaar type test function on $D^u_{r_3}(p)$:
$$
w(x)=\eta(x)a(x),\qquad \eta(x)=\left[\exp\left(\frac{r_3^2-z(x)}{h}\right)-1\right]_+,
$$
where $h=h(n,r_i)$ will be fixed below, and $z(x)=\langle x-p,\cdot Du(x)-Du(p)\rangle$.  Let $x=x_*$ be the maximum point of $w$.  
By $Dw(x_*)=0$,
\eqal{
\label{grad}
D\eta=-\frac{\eta}{a}Da.
}
By $D^2w(x_*)\le 0$, \eqref{grad}, and Jacobi inequality \eqref{Jacobi}, we obtain
\eqal{
\label{Hess}
0\ge \Delta_gw&=a\Delta_g\eta+2\langle\nabla_g \eta,\nabla_ga\rangle+\eta\Delta_ga\\
&=a\Delta_g\eta+\eta\left(\Delta_ga-2\frac{|\nabla_ga|^2}{a}\right)\\
&\ge a\Delta_g\eta\\
&=a\frac{\eta+1}{h^2}\left(-h\Delta_gz+|\nabla_gz|^2\right).
}
Combining this with \eqref{gradientZ} and \eqref{LaplaceZ}, we obtain at $x=x_*$,
$$
z(x_*)\le nh/2= r_1^2
$$
if $h=2r_1^2/n$.  The doubling estimate \eqref{doub} follows:
$$
\sup_{D^u_{r_2}(p)}a(x)\le \sup_{D^u_{r_1}(p)}\frac{e^{(r_3^2-z(x))/h}-1}{e^{(r_3^2-r_2^2)/h}-1}\sup_{D^u_{r_1}(p)}a(x)\le \frac{e^{r_3^2/h}-1}{e^{(r_3^2-r_2^2)/h}-1}\sup_{D^u_{r_1}(p)}a(x).
$$
\end{proof}

\subsection{Extrinsic ball topology}

We recall that a convex function $u$ lies above its tangent planes, while a strictly convex function only intersects a tangent plane at a single point, and a strongly convex function has strictly positive Hessian.  Its subdifferential $\pd u(p)$ at a point $p$ is the collection of slopes of such tangent planes at $p$.  The subdifferentials $\pd u(x)$ are each closed and are locally bounded in $x$ as subsets of $\re^n$.  They increase: $\langle x-p,y-q\rangle\ge 0$ if $y\in\pd u(x),q\in\pd u(p)$.  They also vary continuously with $x$ and $u$: \cite[Theorem 24.5]{R} implies if $u_k\to u$ uniformly on $B_2(0)$ and $x_k\to x\in \overline B_1(0)$, then for any $\ep>0$, there exists $k_0$ such that
\eqal{
\label{R24.5}
\pd u_k(x_k)\subset\pd u(x)+\ep B_1(0),\qquad k\ge k_0.
}
Given $p\in B_1(0)$ and convex function $v(x)$ on $B_2(0)$, we define the \textit{outer sections} by
\eqal{
\label{section}
S^v_r(p):=\Big\{x\in B_1(0):v(x)<v(p)+\sup_{y\in\pd v(p)}[(x-p)\cdot y]+r^2\Big\},
}
and the \textit{extrinsic balls} of a smooth such $v$ by
\eqal{
\label{extrinsic}
D^v_r(p):=\{x\in B_1(0):(x-p)\cdot(Dv(x)-Dv(p))<r^2\}.
}
Euclidean balls ``bound'' the extrinsic balls from below: if $x,p\in B_1(0)$, then
\eqal{
\label{Euc}
B_{r^2/M}(p)\subset D^v_r(p),\qquad M=1+2\|D v\|_{L^\infty(B_2(0))}.
}
Conversely, the sections ``bound'' the extrinsic balls from above:
\eqal{
\label{ext}
D^v_r(p)\subset S^v_r(p).
}
Indeed, by convexity,
$$
v(p)\ge v(x)+(p-x)\cdot Dv(x),\qquad x,p\in B_1(0),
$$
so if the nonnegative $(x-p)\cdot (Dv(x)-Dv(p))<r^2$, then
\begin{align*}
    v(x)&\le v(p)+(x-p)\cdot Dv(p)+(x-p)(Dv(x)-Dv(p))\\
    &<v(p)+(x-p)\cdot D v(p)+r^2.
\end{align*}

\begin{rem}
    The containment \eqref{ext} is still valid for general strictly convex functions, using a subgradient version of the definition \eqref{extrinsic} of extrinsic balls.
\end{rem}

The section $S^u_r(p)$ shrinks to center $p$ uniformly in terms of the center, as the height $r^2$ goes to $0$, provided $u$ is strictly convex.

\begin{lemma}
\label{lem:diam}
    Given $u$ strictly convex on $B_2(0)$,
    \eqal{
    \label{diam}
    \lim_{r\to 0}\sup_{|p|<1}\text{\emph{diam}}\,S^u_r(p)=0.
    }
\end{lemma}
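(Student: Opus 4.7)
The plan is to argue by contradiction using compactness and upper semicontinuity of the subdifferential, so that failure of \eqref{diam} produces a supporting hyperplane touching $u$ at two distinct points, which is ruled out by strict convexity.

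Suppose the conclusion fails. Then there exist $\delta>0$, $r_k\to 0$, centers $p_k\in B_1(0)$, and points $x_k\in S^u_{r_k}(p_k)\subset B_1(0)$ with $|x_k-p_k|\ge\delta$ (one endpoint of a diameter-realizing pair, extracted via the triangle inequality applied to $p_k\in S^u_{r_k}(p_k)$). By compactness of $\overline{B_1(0)}$, along a subsequence $p_k\to p_0$ and $x_k\to x_0$ in $\overline{B_1(0)}\subset B_2(0)$ with $|x_0-p_0|\ge\delta$. Since $\partial u(p_k)$ is compact and convex, I can choose $y_k^*\in\partial u(p_k)$ attaining the supremum in \eqref{section}, so
$$u(x_k)<u(p_k)+(x_k-p_k)\cdot y_k^*+r_k^2.$$
Local Lipschitz continuity of $u$ on compact subsets of $B_2(0)$ bounds $\{y_k^*\}$, and after a further extraction $y_k^*\to y_0^*$.

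Applying \eqref{R24.5} to the constant sequence $u_k\equiv u$ gives $\partial u(p_k)\subset\partial u(p_0)+\varepsilon B_1(0)$ for $k$ large, whence $y_0^*\in\partial u(p_0)$ (since $\varepsilon>0$ was arbitrary and $\partial u(p_0)$ is closed). Passing $k\to\infty$ in the displayed inequality, using continuity of the convex function $u$ in the interior of its domain, yields $u(x_0)\le u(p_0)+(x_0-p_0)\cdot y_0^*$, while $y_0^*\in\partial u(p_0)$ forces the reverse inequality. Equality then makes $u$ affine on the segment $[p_0,x_0]\subset B_2(0)$: convexity provides the upper bound $u(tx_0+(1-t)p_0)\le u(p_0)+t(x_0-p_0)\cdot y_0^*$, and the subgradient inequality at $p_0$ provides the matching lower bound. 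Since $|x_0-p_0|\ge\delta>0$, this affinity on a nontrivial segment contradicts strict convexity of $u$, completing the proof.

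The only delicate step is the identification $y_0^*\in\partial u(p_0)$, which is exactly the outer semicontinuity of subdifferentials encoded by \eqref{R24.5}. The remaining ingredients---compactness of $\overline{B_1(0)}$, attainment of the supremum over the compact set $\partial u(p_k)$, continuity of $u$ in the interior, and the two-sided tangent-plane argument ruling out strict convexity---are routine.
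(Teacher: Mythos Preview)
Your proof is correct and follows essentially the same route as the paper's: contradiction, compactness extraction of $p_k\to p_0$, $x_k\to x_0$, and the maximizing subgradients $y_k^*\to y_0^*\in\partial u(p_0)$ via \eqref{R24.5}, then passage to the limit in the section inequality. The only cosmetic difference is that the paper invokes the strict form of the subgradient inequality directly to reach $u(x_0)\le u(p_0)+(x_0-p_0)\cdot y_0^*<u(x_0)$, whereas you phrase the same contradiction as ``equality forces affinity on $[p_0,x_0]$''; these are equivalent readings of strict convexity.
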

\begin{proof}
    If not, then there exist $r_k\to 0$ and $p_k,x_k\in B_1(0)$ with $x_k\in S^u_{r_k}(p_k)$ and $|x_k-p_k|\ge \ep$ for some fixed $\ep>0$.  We assume $x_k,p_k\to x,p\in \overline B_1(0)$.  By $x_k\in S^u_{r_k}(p_k)$, we have
    \eqal{
    \label{lem1a}
    u(x)<u(p)+(x-p)\cdot y_k+o(1)_k,
    }
    where $y_k\in\pd u(p_k)$ is the maximizer in \eqref{section}.  By \eqref{R24.5}, we can find a subsequence where $y_k\to y\in \pd u(p)$, so \eqref{lem1a} combined with strict convexity and $|x-p|\ge \ep$ gives
    $$
    u(x)\le u(p)+(x-p)\cdot y<u(x).
    $$
    This contradiction completes the proof.
\end{proof}

The section upper ``bound'' \eqref{ext} is preserved under limits.
\begin{lemma}
\label{lem:approx}
    Let $u$ be a strictly convex function on $B_2(0)$ with convex $u_k\to u$ uniformly on $B_2(0)$. Then,  for any $0<\delta<1$ and $p\in B_1(0)$, there exists $k_0$ large enough such that for all $0<r<1$ and $k\ge k_0$,  we have $S^{u_k}_r(p)\subset S^u_{r+\delta}(p)$.
\end{lemma}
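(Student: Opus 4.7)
The plan is to combine three uniform smallness estimates, each linear in a single small parameter $\varepsilon$ that we will choose at the end in terms of $\delta$. Fix $p\in B_1(0)$ and $0<\delta<1$.

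First, by uniform convergence $u_k\to u$ on $B_2(0)$, for any $\varepsilon>0$ there is $k_1$ such that $|u_k(x)-u(x)|<\varepsilon$ for every $x\in B_2(0)$ and $k\ge k_1$. Second, applying \eqref{R24.5} with the constant sequence $x_k\equiv p$, there is $k_2$ such that for $k\ge k_2$,
$$
\pd u_k(p)\subset\pd u(p)+\varepsilon B_1(0).
$$
Therefore, given any $x\in B_1(0)$ and any $y_k\in\pd u_k(p)$, we may write $y_k=y+\varepsilon v$ with $y\in\pd u(p)$ and $|v|\le 1$, so that
$$
(x-p)\cdot y_k\le \sup_{y\in\pd u(p)}(x-p)\cdot y+2\varepsilon,
$$
since $|x-p|\le 2$. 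Taking the sup over $y_k\in\pd u_k(p)$ gives the same bound for the support function that appears in the definition of $S^{u_k}_r(p)$.

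Now suppose $x\in S^{u_k}_r(p)$ for some $0<r<1$ and $k\ge \max(k_1,k_2)$. Chaining the three estimates,
\begin{align*}
u(x)&\le u_k(x)+\varepsilon< u_k(p)+\sup_{y_k\in\pd u_k(p)}(x-p)\cdot y_k+r^2+\varepsilon\\
&\le u(p)+\sup_{y\in\pd u(p)}(x-p)\cdot y+r^2+4\varepsilon.
\end{align*}
To conclude $x\in S^u_{r+\delta}(p)$ it suffices to have $r^2+4\varepsilon\le(r+\delta)^2$, i.e.\ $4\varepsilon\le 2r\delta+\delta^2$. Since this must hold for every $0<r<1$, the binding case is $r\to 0$, requiring $4\varepsilon\le\delta^2$.

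The main (and only) obstacle is to notice that the slack between the two section heights, $(r+\delta)^2-r^2\ge\delta^2$, is \emph{uniform} in $r\in(0,1)$; this is what lets us fix a single threshold $\varepsilon=\delta^2/8$ and a single $k_0=\max(k_1(\varepsilon),k_2(\varepsilon))$ that works for all $r$. Everything else is a linear book-keeping of uniform convergence and the outer-semicontinuity \eqref{R24.5} of the subdifferential at the fixed center $p$.
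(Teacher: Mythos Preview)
Your proof is correct and uses the same two ingredients as the paper---uniform convergence of $u_k$ to $u$ and the outer-semicontinuity \eqref{R24.5} of the subdifferential at the fixed center $p$---to compare the defining inequalities of the two sections. The only difference is packaging: the paper argues by contradiction and extracts convergent subsequences, whereas you give a direct quantitative bound (choosing $\varepsilon=\delta^2/8$ explicitly), which is slightly cleaner but not a genuinely different route.
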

\begin{proof}
    If not, then there exists $\delta>0$, $r_k\to r\in[0,1]$, and $x_k\in B_1(0)$ with $x_k\to x\in\overline B_1(0)$,  $x_k\in S^{u_k}_{r_k}(p)$, but $x_k\notin S^u_{r_k+\delta}(p)$.  The last condition implies
    $$
    u(x)\ge u(p)+\sup_{y\in\pd u(p)}[(x-p)\cdot y]+(r+\delta)^2+o(1)_k,
    $$
    while $x_k\in S^{u_k}_{r_k}(p)$ implies
    $$
    u(x)<u(p)+(x-p)\cdot y_k+r^2+o(1)_k
    $$
    for some $y_k\in \pd u_k(p)$.  By \eqref{R24.5}, we can find a subsequence such that $y_k\to y\in \pd u(p)$, so
    $$
    u(x)\le u(p)+(x-p)\cdot y+r^2\le u(x)+r^2-(r+\delta)^2.
    $$
    This contradiction completes the proof.
\end{proof}

As a consequence of Lemmas \ref{lem:diam} and \ref{lem:approx}, we see that $D^{u_k}_r(p)\subset S^u_{2r}(p)\subset\subset B_1(0)$ if $r$ is small enough depending on $u$, and $k$ is sufficiently large depending on $u,r,$ and $p$.  In this case, open set $D^{u_k}_r(p)$ has smooth boundary for smooth strictly
convex $u_k$ and is star shaped with center $x=p$.

\section{Proof of Theorem \ref{thm:MA}}

\textit{Step 1: Approximation.}  
Given viscosity solution $u(x)$ on $B_3(0)$, we show that $u$ is smooth in a neighborhood of any given point in $B_3(0)$, say near $x=0$.  By solving the Dirichlet problem, we find smooth solutions $u_k\to u$ uniformly on $B_2(0)$ 
with $Du_k$ bounded on $B_2(0)$ uniformly in $k$.  
In \eqref{Euc}, we enlarge the constant $M$: 
\eqal{
\label{M}
M:=1+2\sup_k\|Du_k\|_{L^\infty(B_2(0))}.
}

\smallskip
\noindent
\textit{Step 2: Partial regularity.} 
By combining Alexandrov's theorem (convex functions a.e. twice differentiable) with Savin's small perturbation theorem \cite[Theorem 1.3]{S}, the singular set $\text{sing}(u)$ is closed and measure zero.  By Savin's small perturbation theorem again, $u_k\to u$ in $C^{2,\alpha}_{loc}$ inside the open set $\text{sing}(u)^c:=B_2(0)\setminus\text{sing}(u)$, so the Hessians of $u_k$ are uniform in fixed compact subsets of $\text{sing}(u)^c$.

\smallskip
\noindent
\textit{Step 3: Uniform radius of the doubling balls.} We first find $r_2$ to control $x=0$ by $x=p$.  We first use \eqref{diam} to find $0<\rho<1$ small enough such that $S^u_{4\rho}(p)\subset\subset B_1(0)$ for all $|p|<1/2$.  Next we observe that $0\in B_{r^2/M}(p(r,e))$ for $p(r,e)=(r^2/2M)e$ and any $e\in S^{n-1}$.  Since $\text{sing}(u)$ is measure zero, we can fix $0<r\le \rho$ and $e\in S^{n-1}$ such that $p=p(r,e)\in \text{sing}(u)^c$.  This ensures $0\in B_{r^2/M}(p)\subset D^{u_k}_r(p)$, by \eqref{Euc}.  We set $r_2=r,r_3=2r$, and $r_4=3r$.  For the inner radius, since $p\in \text{sing}(u)^c$, we use \eqref{diam} to find $r_1<r_2$ small enough such that $S^u_{2r_1}(p)\subset\subset  \text{sing}(u)^c$.  

\smallskip
\noindent
\textit{Step 4: Doubling inequality.}  Now with $p$ and $r$ fixed, for $k$ large enough, Lemma \ref{lem:approx} and \eqref{ext} show $D^{u_k}_{r_4}(p)\subset S^{u_k}_{3r}(p)\subset S^{u}_{4r}(p)\subset\subset B_1(0)$, the equation domain $\Omega$.  We apply Proposition \ref{prop:doub} to obtain
$$
\max_{D^{u_k}_{r_2}(p)}|D^2u_k|\le C(r,r_1,n,\max_{D^{u_k}_{r_1}(p)}|D^2u_k|).
$$
For large enough $k$, we have $D^{u_k}_{r_1}(p)\subset S^{u_k}_{r_1}(p)\subset S^u_{2r_1}(p)\subset\subset\text{sing}(u)^c$.  By Alexandrov-Savin locally uniform convergence in $C^{2,\alpha}$ of $u_k$ to $u$ in $\text{sing}(u)^c$, we conclude a uniform Hessian bound in a fixed neighborhood of $x=0$, if $k$ is large enough:
$$
\max_{B_{r^2/M}(p)}|D^2u_k|\le \max_{D^{u_k}_{r_2}(p)}|D^2u_k|\le C(r,n,\max_{S^u_{2r_1}(p)}|D^2u|).
$$
By Calabi \cite{Cal} or Evans-Krylov, a subsequence $u_k$ converges in $C^{2,\alpha}_{loc}(B_{r^2/M}(p))$.  We conclude $u$ is smooth inside $B_{r^2/M}(p)$, hence smooth near $x=0$.  This completes the proof of interior regularity.

%
%
%
%
%
%
%
%

\bigskip
\textbf{Acknowledgments.}  Y.Y. is partially supported by an NSF grant.

\smallskip
\noindent
DEPARTMENT OF MATHEMATICS, PRINCETON UNIVERSITY, PRINCETON, NJ 08544-1000

\textit{Email address:} rs1838@princeton.edu

\smallskip
\noindent
DEPARTMENT OF MATHEMATICS, UNIVERSITY OF WASHINGTON, BOX 354350, SEATTLE, WA 98195

\textit{Email address:} yuan@math.washington.edu


\begin{thebibliography}{9999}                                                                   
\bibitem[A]{A} Aleksandrov, A.D., \textit{Smoothness of a convex surface of bounded Gaussian curvature}, Dokl. Akad. Nauk. SSSR \textbf{36} (1942), 195-199.



\bibitem[Cal]{Cal} Calabi, E., \textit{Improper affine hyperspheres of convex
type and a generalization of a theorem by K. J\"{o}rgens.} Michigan Math. J. \textbf{5} (1958), 105--126.

\bibitem[CHO]{CHO} Chen, C., Han, F., \& Ou, Q., \textit{The interior $C^2$ estimate for the Monge-Amp\`ere equation in dimension $n=2$.} Analysis \& PDE \textbf{9} (2016), no. 6, 1419--1432.

\bibitem[GQ]{GQ} Guan, P.; Qiu, G., \emph{Interior }$C^{2}$\emph{
regularity of convex solutions to prescribing scalar curvature equations.}
Duke Math. J. \textbf{168} (2019), no. 9, 1641--1663.

\bibitem[H]{H} Heinz, E., \emph{On elliptic Monge-Amp\`{e}re equations and Weyl's embedding problem}. J. Analyse Math. \textbf{7} (1959) 1--52.

\bibitem[L]{L} Liu, J.K., \textit{Interior $C^2$ estimate for Monge–Amp\`{e}re equations in dimension two}. Proc. Am. Math. Soc. \textbf{149} (2021), No. 6, 2479--2486.

\bibitem[P64]{P64} Pogorelov, A. V., \textit{Monge-Amp\`{e}re equations of elliptic type.} Translated from the first Russian edition by Leo F. Boron with the assistance of Albert L. Rabenstein and Richard C. Bollinger. P. Noordhoff, Ltd., Groningen, 1964.

\bibitem[P78]{P78} Pogorelov, A. V., \emph{The Minkowski
multidimensional problem.} Translated from the Russian by Vladimir Oliker.
Introduction by Louis Nirenberg. Scripta Series in Mathematics. V. H. Winston
\& Sons, Washington, D.C.; Halsted Press [John Wiley \& Sons], New
York-Toronto-London, 1978.

\bibitem[R]{R} Rockafellar, R. T., \textit{Convex analysis}. Reprint of the 1970 original. Princeton Landmarks in Mathematics. Princeton Paperbacks. Princeton University Press, Princeton, NJ, 1997.

\bibitem[S]{S} Savin, O., \emph{Small perturbation solutions for elliptic equations.} Comm. Partial Differential Equations \textbf{32} (2007), no. 4, 557-578.

\bibitem[SY]{SY} Shankar, R., \& Yuan, Y., \textit{Hessian estimates for the sigma-2 equation in dimension four}, arXiv preprint (2023) arXiv:2305.12587.

\bibitem[Y]{Y} Yuan, Y., \textit{A monotonicity approach to Pogorelov’s Hessian estimates for Monge-Amp\`{e}re equation}. Mathematics in Engineering \textbf{5} (2023), no. 2, 1--6.

































\end{thebibliography}
\end{document}